\newcommand\blfootnote[1]{%
  \begingroup
  \renewcommand\thefootnote{}\footnote{#1}%
  \addtocounter{footnote}{-1}%
  \endgroup
}
\newtheorem{theorem}{Theorem}[section]
\newtheorem{proposition}{Proposition}[section]
\newtheorem{example}{Example}[section]
\title{Sharp Hardy inequalities in an exterior of a ball}
\date{}
\author{
Nikolai Kutev\thanks{Institute of Mathematics and Informatics, Bulgarian Academe of Sciences, 1113, Sofia, Bulgaria}
 \and Tsviatko Rangelov
 \footnotemark[1]
}
\begin{document}

\maketitle
\blfootnote{Corresponding author: T. Rangelov, rangelov@math.bas.bg}

\begin{abstract}
\noindent
New Hardy type inequalities in sectorial area and as a limit in an exterior of a ball are proved. Sharpness of  the inequalities is shown as well.
\end{abstract}

\vspace{2pt}

\noindent
{\bf Keywords:} Hardy inequalities, sharp estimates.

\vspace{2pt}
\noindent
{\bf 2010 Mathematics Subject Classification:} 26D10, 35P15

\section{Introduction}
\label{sec1}
The classical Hardy inequality, proved in \citet{Ha20,
Ha25} states
\begin{equation}
\label{1eq1} \int_0^\infty |u'(x)|^px^\alpha dx\geq
\left(\frac{p-1-\alpha}{p}\right)^p\int_0^\infty
x^{-p+\alpha}|u(x)|^pdx
\end{equation}
where $1<p<\infty$, $\alpha<p-1$ and $u(x)$ is absolutely continuous
on $[0,\infty)$, $u(0)=0$.

There are several generalizations of \eqref{1eq1} in multidimensional case mostly in bounded domains, see \citet{Da98, GM13, BEL15, KR20} and the literature therein. For unbounded domains, in the exterior of a boll, only few generalizations of \eqref{1eq1} are reported in \citet{WZ03, AFT09, AL10}.

For example, in Theorem 1.1 in \citet{WZ03} for all function $u\in D_a^{1,2}(R^n)$, where $D_a^{1,2}(R^n)$ is the weighted Sobolev space - the comletion of $C_0^\infty(R^n)$ with the norm $\int_{R^n}|x|^{-2a}|u|^2dx$, the following inequality for $a<\frac{n-2}{2}$ is proved
\begin{equation}
\label{eq01}
\int_{B_1^c}|x|^{-2a}|\nabla u|^2dx\geq\left(\frac{n-2-2a}{2}\right)^2\int_{B_1^c}\frac{|u|^2}{|x|^2(a+1)}dx+\frac{n-2-2a}{2}\int_{\partial B_1^c}u^2,
\end{equation}
where $B_1^c=\{|x|>1\}$.

Let us mention that in \citet{WZ03} Hardy inequalities with weights in unbounded domains $\Omega\subset R^n, 0\notin \partial\Omega$ are also considered, see Theorem 1.3 and Remark 1.5.

In \citet{AFT09}, Corollary, for $n\geq3$ and $u\in C_0^\infty(B_1^c)$ the limiting case of the Caffarelli--Kohn--Nirenberg inequality
\begin{equation}
\label{eq03}
\int_{B_1^c}|\nabla u|^2dx\geq\left(\frac{n-2}{2}\right)^2\int_{B_1^c}\frac{u^2}{|x|^2}dx+C_n(a)\left[\int_{B_1^c}X_1^{\frac{2(n-1)}{n-2}}\left(a,\frac{1}{|x|}\right)u^{\frac{2n}{n-2}}dx\right]^{\frac{n-2}{n}},
\end{equation}
is proved, where $X_1(a,s)=a-\ln s)^{-1}$, $a>0, 0<s\leq1$ and the constant $C_n(a)$ is given explicitly.

Finally, in Corollary 1 and Remark 1 in \citet{AL10} the following Hardy inequality is proved for $n\geq3$ and $u\in W_0^{1,2}(B_r^c)$
\begin{equation}
\label{eq04}
\int_{B_r^c}|\nabla u|^2dx\geq\left(\frac{n-2}{2}\right)^2\int_{B_r^c}\frac{u^2}{|x|^2}dx+\frac{1}{4}\int_{B_r^c}\left(\frac{1}{|x-r|^2}-\frac{1}{|x|^2}\right)u^2dx.
\end{equation}

At the end of the paper we compare the inequalities  \eqref{eq01}, \eqref{eq03}, \eqref{eq04} with our results.

The aim of the present work is to derive new Hardy inequalities in the exterior of a ball. There are listed also functions for which these inequalities are sharp, i.e., inequalities with an optimal constant of the leading term become equations.

\section{Inequalities in sectorial area}
\label{sec2}
We start with Hardy inequalities in sectorial area $B_R\backslash B_r$ where
$B_R$, $B_r$ are balls centered at zero, $0<r<R$ .  Let $1<p$, $p'=\frac{p}{p-1}$, $2\leq n$ and
denote
$m=\frac{p-n}{p-1}=\frac{p-n}{p}p'$.

For  functions $u$  such that $\int_{B_R\backslash B_r}\left|\frac{\langle x,\nabla
u\rangle}{|x|}\right|^pdx<\infty$ let us define two sets
$$
\begin{array}{lll}
M_1(r,R)&=&\left\{\begin{array}{l}
\left|\frac{R^m-\hat{R}^m}{m}\right|^{1-p}\int_{\partial
B_{\hat{R}}}|u|^pd\sigma\rightarrow0, \ \ \hat{R}\rightarrow R-0, \
\ m\neq0,
\\[1pt]
\\
\left|\ln{\frac{R}{\hat{R}}}\right|^{1-n}\int_{\partial
B_{\hat{R}}}|u|^nd\sigma\rightarrow0, \ \ \hat{R}\rightarrow R-0, \
\ m=0
\end{array}\right.
\\[1pt]
\\
M_2(r,R)&=&\left\{\begin{array}{l}
\left|\frac{\hat{r}^m-r^m}{m}\right|^{1-p}\int_{\partial
B_{\hat{r}}}|u|^pd\sigma\rightarrow0, \ \ \hat{r}\rightarrow r+0, \
\ m\neq0,
\\[1pt]
\\
\left|\ln{\frac{\hat{r}}{r}}\right|^{1-n}\int_{\partial
B_{\hat{r}}}|u|^nd\sigma\rightarrow0, \ \ \hat{r}\rightarrow r+0, \
\ m=0
\end{array}\right.
\end{array}
$$
where $\langle , \rangle$ is a scalar product in $R^n$.

Let functions $\psi_j$ be solutions of the problems:
$$
\begin{array}{lll}
&& -\Delta_p\psi_1=0, \ \ \hbox{ in } B_R\backslash \bar{B}_r,\ \
\psi_1|_{\partial B_R}=0,\ \ \psi_1|_{\partial B_r}=1,
\\[1pt]
\\
&& -\Delta_p\psi_2=0, \ \ \hbox{ in } B_R\backslash \bar{B}_r,\ \
\psi_2 |_{\partial B_R}=1,\ \ \psi_2|_{\partial B_r}=0.
\end{array}
$$
Their explicit form is:
$$
\psi_1(x)=\left\{\begin{array}{l}\frac{R^m-|x|^m}{R^m-r^m},
\ \ m\neq 0, \\
\frac{\ln{\frac{R}{|x|}}}{\ln{\frac{R}{r}}},
\ \ m= 0\end{array}\right. , \ \
\psi_2(x)=\left\{\begin{array}{l}\frac{|x|^m-r^m}{R^m-r^m},
\ \ m\neq 0, \\
\frac{\ln{\frac{|x|}{r}}}{\ln{\displaystyle\frac{R}{r}}},
\ \ m= 0\end{array}\right..
$$

We can define  vector functions $f_i$  as
$f_i=\left|\displaystyle\frac{\nabla\psi_i}{\psi_i}\right|^{p-2}\displaystyle\frac{\nabla\psi_i}{\psi_i}$.
in $B_R\backslash \bar{B}_r$
and
\begin{equation}
\label{eq7}\begin{array}{lll}
L^i(u)&=&\int_{B_R\backslash
B_r}\left|\frac{\langle \nabla\psi_i,\nabla u\rangle}{|\nabla\psi_i|}\right|^pdx=\int_{B_R\backslash B_r}\left|\frac{\langle x,\nabla
u\rangle}{|x|}\right|^pdx,
\\[1pt]
\\
K^i(u)&=& \int_{B_R\backslash B_r}\left|\frac{\nabla
\psi_i}{\psi_i}\right|^p|u|^pdx=\int_{B_R\backslash B_r}|f_i|^{p'}|u|^pdx ,
\\[1pt]
\\
K^i_0(u)&=& \int_{\partial(B_R\backslash
B_r)}\left|\frac{\nabla
\psi_i}{\psi_i}\right|^{p-2}\left\langle\frac{\nabla\psi_i}{\psi_i},\nu\right\rangle|u|^pds=\int_{\partial(B_R\backslash
B_r)}\langle f_i,\nu\rangle|u|^pdx.
\end{array}
\end{equation}
Here $\nu$ is the outward normal to $B_R\backslash B_r$.

The following  Theorem takes place.
\begin{theorem}
\label{th1} For every $u\in M_i(r,R)$ we have
\begin{equation}
\label{eq6}
L^i(u)\geq\left(\frac{1}{p}\right)^p\frac{[K^i_0(u)+(p-1)K_1^i(u)]^p}{(K_1^i(u))^{p-1}}=K^i(u).
\end{equation}
where $\nu$ is the outward normal to $B_R\backslash B_r$.
\end{theorem}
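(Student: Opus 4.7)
The plan is to exploit the $p$-harmonicity of $\psi_i$ by deriving a divergence identity for the auxiliary vector field $f_i$ and then combining the divergence theorem with H\"older's inequality. A direct calculation using $\Delta_p\psi_i=0$ together with $|f_i|^{p'}=(|\nabla\psi_i|/\psi_i)^{p}$ (since $(p-1)p'=p$) yields the key pointwise identity
$$\nabla\cdot f_i = -(p-1)|f_i|^{p'}\qquad\text{in } B_R\setminus\bar B_r.$$

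Next, I would apply the divergence theorem to $f_i|u|^p$ on a truncated annulus $B_{\hat R}\setminus\bar B_{\hat r}$ with $r<\hat r<\hat R<R$, where $\psi_i$ stays bounded away from zero so that $f_i$ is smooth. This produces
$$\int_{\partial(B_{\hat R}\setminus\bar B_{\hat r})}\langle f_i,\nu\rangle|u|^p\,ds = -(p-1)\int |f_i|^{p'}|u|^p\,dx + p\int |u|^{p-2}u\,\langle f_i,\nabla u\rangle\,dx.$$
The conditions defining $M_i(r,R)$ are calibrated precisely so that, as $\hat r\to r+$ and $\hat R\to R-$, the boundary layer on the sphere where $\psi_i$ vanishes gives a vanishing contribution. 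What survives in the limit is the identity $K^i_0(u)+(p-1)K^i(u)=p\int_{B_R\setminus B_r}|u|^{p-2}u\,\langle f_i,\nabla u\rangle\,dx$.

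Finally, using that $\nabla\psi_i$ is radial, so that $\langle f_i,\nabla u\rangle = (|\nabla\psi_i|/\psi_i)^{p-1}\,\langle\nabla\psi_i,\nabla u\rangle/|\nabla\psi_i|$, I would bound the remaining integral by H\"older with exponents $p'$ and $p$, noting that $(p-1)p'=p$ makes the first factor collapse to $K^i(u)^{1/p'}$ and the second to $L^i(u)^{1/p}$. This gives
$$K^i_0(u)+(p-1)K^i(u) \le p\,(K^i(u))^{1/p'}(L^i(u))^{1/p},$$
and raising to the $p$-th power and dividing by $p^p(K^i(u))^{p-1}$ produces the stated inequality. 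The trailing equality with $K^i(u)$ corresponds to the sharpness case $u=\psi_i$: there equality holds throughout the H\"older step because $|\partial_\nu\psi_i|=|\nabla\psi_i|$, and a short calculation yields $K^i_0(\psi_i)=K^i(\psi_i)$, whereupon the middle expression collapses to $K^i(u)$.

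The main obstacle is the limit passage described above: one of the spheres bounding $B_R\setminus B_r$ is a level set on which $\psi_i$ vanishes, so $f_i$ blows up there. Showing that the truncated boundary integrals drop out requires a careful asymptotic expansion of $(|\nabla\psi_i|/\psi_i)^{p-1}$ near that sphere, extracted from the explicit formula for $\psi_i$; the power and logarithmic weights in the definition of $M_i(r,R)$ are designed precisely to match this blow-up rate in both cases $m\ne 0$ and $m=0$, so the verification splits naturally into those two subcases.
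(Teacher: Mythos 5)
Your argument is essentially the paper's proof: the same divergence identity $-\mathrm{div}\,f_i=(p-1)|f_i|^{p'}$ coming from the $p$-harmonicity of $\psi_i$, the same integration by parts on a truncated annulus $B_{\hat R}\setminus \bar B_{\hat r}$ with the $M_i(r,R)$ conditions calibrated to kill the boundary contribution on the sphere where $\psi_i$ vanishes, and the same H\"older step with exponents $p$ and $p'$ exploiting $(p-1)p'=p$ and the radiality of $\nabla\psi_i$ (the paper merely applies H\"older to the identity before integrating the numerator by parts, which is immaterial). The only point of divergence is your reading of the trailing ``$=K^i(u)$'' as the sharpness case $u=\psi_i$; the paper's proof does not address that equality at all, so nothing essential in your argument differs from theirs.
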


\begin{proof}
We follow the proof of Proposition 1 in \citet{FKR14c}. Since
\begin{equation}
\label{eq8} \int_{B_{\hat{R}}\backslash
B_{\hat{r}}}\langle f_i,\nabla |u|^p\rangle dx=p\int_{B_{\hat{R}}\backslash
B_{\hat{r}}}|u|^{p-2}u\langle f_i,\nabla u\rangle dx ,
\end{equation}
where $r<\hat{r}<\hat{R}<R$. Then applying H\"older inequality on
the \textit{rhs} of \eqref{eq8} with $\displaystyle \frac{\langle x,\nabla
u\rangle}{|x|}$ and $\displaystyle |f_i||u|^{p-2}u$ as factors of the
integrand we get
\begin{equation}
\label{eq9}
\begin{array}{lll}\int_{B_{\hat{R}}\backslash
B_{\hat{r}}}\langle f_i,\nabla|u|^p\rangle dx &\leq&
p\left(\int_{B_{\hat{R}}\backslash
B_{\hat{r}}}\left|\frac{\langle x,\nabla u\rangle}{|x|}\right|^pdx\right)^{1/p}
\\ &\times& \left(\int_{B_{\hat{R}}\backslash
B_{\hat{r}}}|f_i|^{p'}|u|^pdx\right)^{1/p'}.
\end{array}
\end{equation}

Rising to $p$ power both sides of \eqref{eq9} it follows that
\begin{equation}
\label{eq10} \int_{B_{\hat{R}}\backslash B_{\hat{r}}}
\left|\frac{\langle x,\nabla
u\rangle}{|x|}\right|^pdx\geq\frac{\left|\frac{1}{p}\int_{B_{\hat{R}}\backslash
B_{\hat{r}}}\langle f_i,\nabla|u|^p\rangle dx\right|^p}{\left(
\int_{B_{\hat{R}}\backslash
B_{\hat{r}}}|f_i|^{p'}|u|^pdx\right)^{p-1}}.
\end{equation}

Integrating by parts the numerator of the right hand side of \eqref{eq10} we
get
\begin{equation}
\label{eq11}
\begin{array}{lll}
&& \frac{1}{p}\int_{B_{\hat{R}}\backslash
B_{\hat{r}}}\langle f_i,\nabla|u|^p\rangle dx=\frac{1}{p} \int_{\partial
B_{\hat{R}}\cup
\partial B_{\hat{r}}}\langle f_i,\nu\rangle |u|^pdS-\frac{1}{p}\int_{B_{\hat{R}}\backslash
B_{\hat{r}}}\hbox{div} f_i|u|^pdx
\\
&&=\frac{1}{p}\int_{\partial B_{\hat{R}}\cup
\partial B_{\hat{r}}}\langle f_i,\nu\rangle |u|^pdS
+\left(\frac{p-1}{p}\right)\int_{B_{\hat{R}}\backslash
B_{\hat{r}}}|f|^{p'}|u|^pdx
\\ 
&&\rightarrow\displaystyle \frac{1}{p}((p-1)K^i+K^i_0), \hbox{ when
} \hat{R}\rightarrow R-0,\ \ \hat{r}\rightarrow r+0.
\end{array}
\end{equation}
 Note that
$\int_{\partial B_R\cup\partial
B_r}\langle f_i,\nu\rangle |u|^pdS\geq 0$ for $u\in M_i(r,R)$, since
$\nu|_{\partial B_R}=\frac{x}{|x|}|_{\partial B_R}$,
$\nu|_{\partial B_r}=-\frac{x}{|x|}|_{\partial B_r}$.
From \eqref{eq10} and \eqref{eq11} we obtain \eqref{eq6} since
$$
-\hbox{div}f_i=(p-1)|f_i|^{p'}.
$$
\end{proof}
\section{Inequalities in an exterior of a ball  $B_r^c=R^n\backslash \bar{B}_r$}
\label{sec3}
Let us introduce functions $u$ such that 
$$
\int_{B_r^c}\left|\frac{<x,\nabla
u>}{|x|}\right|^pdx<\infty, \ \  \int_{B_r^c}\frac{
|u|^p}{|x|^{(n-1)p'}}dx<\infty
$$
and define two sets
$$
\begin{array}{lll}
M_1(r,\infty)&=&\left\{\begin{array}{l}
R^{1-n}\displaystyle\int_{\partial
B_{R}}|u|^pd\sigma\rightarrow0, \ \ R\rightarrow \infty, \
\ m\geq0,
\\
R^{1-p}\int_{\partial
B_{R}}|u|^pd\sigma\rightarrow0, \ \ R\rightarrow \infty, \
\ m<0. \end{array}\right.
\\[2pt]
\\
M_2(r,\infty)&=&\left\{\begin{array}{l}
\left|\frac{\hat{r}^m-r^m}{m}\right|^{1-p}\int_{\partial
B_{\hat{r}}}|u|^pd\sigma\rightarrow0, \ \ \hat{r}\rightarrow r+0, \ \
m\neq0,
\\
\left(\ln\frac{\hat{r}}{r}\right)^{1-p}\int_{\partial
B_{\hat{r}}}|u|^pd\sigma\rightarrow0, \ \ \hat{r}\rightarrow r+0, \ \
m=0.
\end{array}\right.
\end{array}
$$
In a similar way we can prove Theorem \ref{th1}, replacing
$B_R\backslash \bar{B}_r$ with $B^c_r=R^n\backslash \bar{B}_r$ and $\partial(B_R\backslash \bar{B}_r)$
with $\partial B^c_r=\partial B_r$ and $L^i(u), K^I_0(u), K^i_1(u)$ define in \eqref{eq7} for $R\rightarrow\infty$. The inequalities below for functions of $M_i(r,\infty), i=1,2$ can be obtained with a limit $R\rightarrow\infty$.
\begin{proposition}
\label{prop1}
For every $u\in M_1(r,\infty)$ the following inequalities hold:
\begin{itemize}
\item[(i)]
\begin{equation}
\label{eq21}\begin{array}{lll}
&&\left(\int_{
B^c_r}\frac{|u|^p}{|x|^{(n-1)p'}}dx\right)^{\frac{1}{p'}}
\left(\int_{B^c_r}\left|\frac{<x,\nabla
u>}{|x|}\right|^pdx\right)^{\frac{1}{p}}
\\[1pt]
\\
&&\geq\frac{1}{p}r^{1-n}\int_{\partial B_r}|u|^pdS, \ \ for \ \ m>0.
\end{array}
\end{equation}
With function $u_\alpha(x)=e^{-\alpha|x|^m}, \ \ \alpha>0$ inequality
\eqref{eq21} becomes equality.
\item[(ii)] For $m<0$  we get:
\begin{equation}
\label{eq22}\begin{array}{lll}
&&\left(\int_{B^c_r}\left|\frac{<x,\nabla
u>}{|x|}\right|^pdx\right)^{\frac{1}{p}}
\geq\frac{|m|}{p'}\left(\int_{
B^c_r}\frac{|u|^p}{|x|^{p}}dx\right)^{\frac{1}{p}}
\\[1pt]
\\
&&+\frac{1}{p}r^{1-p}\int_{\partial
B_r}|u|^pdS\left(\int_{
B^c_r}\frac{|u|^p}{|x|^{p}}dx\right)^{-\frac{1}{p'}}, \ \ for \ \  m<0.
\end{array}
\end{equation}
With function $u_k(x)=|x|^{k m}, \ \ k>p'$ inequality
\eqref{eq22} becomes an equality.
\item[(iii)]
\begin{equation}
\label{eq23}\begin{array}{lll}
&&\left(\int_{
B^c_r}\frac{|u|^n}{|x|^{n}}dx\right)^{\frac{n-1}{n}}
\left(\int_{B^c_r}\left|\frac{<x,\nabla
u>}{|x|}\right|^ndx\right)^{\frac{1}{n}}
\\[1pt]
\\
&&\geq\frac{1}{n}r^{1-n}\int_{\partial B_r}|u|^ndS, \ \ for \ \ m=0.
\end{array}
\end{equation}
With function $u_q(x)=|x|^q, \ \ q<0$ inequality
\eqref{eq23} becomes equality.
\end{itemize}
\end{proposition}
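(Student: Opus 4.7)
The plan is to apply Theorem~\ref{th1} with $i=1$ to the annulus $B_R\setminus\bar B_r$ and then pass to the limit $R\to\infty$; the definition of $M_1(r,\infty)$ is exactly what is needed to annihilate the contribution of $\partial B_R$ in the boundary term as $R\to\infty$. Taking the $p$-th root, inequality \eqref{eq6} reads
\[
  L^1_R(u)^{1/p}\,K^1_R(u)^{1/p'}\;\geq\;\tfrac{1}{p}K^1_{0,R}(u)+\tfrac{p-1}{p}K^1_R(u),
\]
where subscripts $R$ record the dependence of $\psi_1$ on $R$. From the explicit formula for $\psi_1$ one computes $|\nabla\psi_1/\psi_1|=m|x|^{m-1}/(R^m-|x|^m)$ when $m\neq0$ and $|\nabla\psi_1/\psi_1|=1/(|x|\ln(R/|x|))$ when $m=0$; with $\nu=-x/|x|$ on $\partial B_r$, the boundary term reduces to $K^1_{0,R}(u)=(mr^{m-1}/(R^m-r^m))^{p-1}\int_{\partial B_r}|u|^p\,dS$ (respectively $(r\ln(R/r))^{1-p}\int_{\partial B_r}|u|^p dS$ in the $m=0$ case).

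Case (ii), $m<0$, is the easiest: since $R^m\to0$, we have $\psi_1\to(r/|x|)^{|m|}$ pointwise on $B_r^c$ and $|\nabla\psi_1/\psi_1|\to|m|/|x|$. By dominated convergence $K^1_R(u)\to|m|^p\int_{B_r^c}|u|^p|x|^{-p}dx$ and $K^1_{0,R}(u)\to(|m|/r)^{p-1}\int_{\partial B_r}|u|^p dS$; substituting these finite limits in the displayed inequality and dividing by $|m|^{p-1}$ gives \eqref{eq22} after using $(p-1)/p=1/p'$.

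Cases (i) $m>0$ and (iii) $m=0$ are more delicate because both $K^1_R(u)$ and $K^1_{0,R}(u)$ vanish as $R\to\infty$ and one needs their leading asymptotic order. The algebraic identities $(m-1)p=-(n-1)p'$ and $(m-1)(p-1)=1-n$, both equivalent to $m(p-1)=p-n$, do the work. For $m>0$ they yield
\[
  K^1_R(u)\sim m^p R^{-mp}\!\!\int_{B_r^c}\!\frac{|u|^p}{|x|^{(n-1)p'}}\,dx,\qquad K^1_{0,R}(u)\sim m^{p-1}r^{1-n}R^{-m(p-1)}\!\!\int_{\partial B_r}\!|u|^p\,dS,
\]
so $(K^1_R(u))^{1/p'}$ is of the same order $R^{-m(p-1)}$ as $K^1_{0,R}(u)$; multiplying the inequality by $R^{m(p-1)}$ kills the $\tfrac{p-1}{p}K^1_R(u)$ term (it becomes $O(R^{-m})\to0$) while the other two survive, and \eqref{eq21} pops out. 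Case $m=0$ runs on the same lines with $R$ replaced by $\ln R$; the relevant scaling is $(\ln R)^{n-1}$, yielding \eqref{eq23}.

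Sharpness is verified by direct substitution. For (i), $u_\alpha=e^{-\alpha|x|^m}$ makes the two weighted integrals on the LHS share the integrand $|x|^{-(n-1)p'}e^{-p\alpha|x|^m}$; after the change of variables $v=p\alpha|x|^m$ both sides of \eqref{eq21} reduce to the same constant multiple of $e^{-p\alpha r^m}$. For (ii), $u_k=|x|^{km}$ with $k>p'$ makes $L^{1,\infty}$ and $\int|u|^p/|x|^p\,dx$ share the common radial weight $s^{(km-1)p+n-1}$, and the identity $k-1/p'=(kp-(p-1))/p$ yields equality. Case (iii) is analogous with $u_q=|x|^q$, $q<0$. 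The main obstacle I expect is the rigorous justification of the asymptotic expansions in cases (i) and (iii)---specifically that replacing $(R^m-|x|^m)^p$ by $R^{mp}$ inside $K^1_R(u)$ is legitimate in the limit, which needs a dominated-convergence argument based on the integrability assumption $\int_{B_r^c}|u|^p|x|^{-(n-1)p'}dx<\infty$; past that point, the two identities above reduce everything to bookkeeping.
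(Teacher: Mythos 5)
Your proposal is correct and follows essentially the same route as the paper: apply Theorem~\ref{th1} with $i=1$ on the annulus $B_R\setminus B_r$, use the explicit form of $\psi_1$ (which yields exactly the paper's intermediate inequalities \eqref{eq12} and \eqref{eq13}), and pass to the limit $R\to\infty$, with the $m>0$ and $m=0$ cases handled by tracking the common order $R^{-m(p-1)}$ (resp. $(\ln R)^{1-n}$) of the surviving terms and the sharpness checked by the same direct substitutions. Your normalization by $R^{m(p-1)}$ is just an algebraic rearrangement of the paper's term-by-term limit computation, and the dominated-convergence caveat you flag is a genuine subtlety that the paper itself passes over silently.
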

\begin{proof}
For $m\neq0$ the inequality \eqref{eq6} has the form
\begin{equation}
\label{eq12}
\begin{array}{lll}
&&\left(\int_{B_R\backslash B_r}\left|\frac{<x,\nabla
u>}{|x|}\right|^p\right)^{\frac{1}{p}}
\geq\frac{1}{p'}\left(\int_{B_R\backslash
B_r}\frac{|u|^p}{|x|^{(n-1)p'}\left|\frac{R^m-|x|^m}{m}
\right|^p}dx\right)^{\frac{1}{p}}
\\[1pt]
\\
&+&\frac{1}{p}r^{1-n}\left|\frac{R^m-r^m}{m}
\right|^{1-p}\int_{\partial B_r}|u|^pdS\left(\int_{B_R\backslash
B_r}\frac{|u|^p}{|x|^{(n-1)p'}\left|\frac{R^m-|x|^m}{m}
\right|^p}dx\right)^{-\frac{1}{p'}}.
\end{array}
\end{equation}
Analogously, for $m=0$, i. e. $p=n$ the inequality \eqref{eq6} becomes
\begin{equation}
\label{eq13}
\begin{array}{lll}
&&\left(\int_{B_R\backslash B_r}\left|\frac{<x,\nabla
u>}{|x|}\right|^ndx\right)^{\frac{1}{n}}
\geq\frac{n-1}{n}\left(\int_{B_R\backslash
B_r}\frac{|u|^n}{|x|^n\left|\ln\frac{R}{|x|}\right|^n}dx\right)^{\frac{1}{n}}
\\[1pt]
\\
&+&\frac{1}{n}\left(r\ln\frac{R}{r}\right)^{1-n}\int_{\partial
B_r}|u|^ndS\left(\int_{B_R\backslash
B_r}\frac{|u|^n}{|x|^n\left|\ln\frac{R}{|x|}\right|^n}dx\right)^{\frac{1-n}{n}}.
\end{array}
\end{equation}
(i) For $m>0$, after the limit $R\rightarrow\infty$ in \eqref{eq12} we obtain
$$\begin{array}{lll}
&&\frac{1}{p'}\left(\int_{B_R\backslash
B_r}\frac{|u|^p}{|x|^{(n-1)p'}\left|\frac{R^m-|x|^m}{m}
\right|^p}dx\right)^{\frac{1}{p}}\rightarrow_{R\rightarrow\infty}0 \ \ \hbox{and}
\\[1pt]
\\
&&\frac{1}{p}r^{1-n}\left|\frac{R^m-r^m}{m}
\right|^{1-p}\int_{\partial B_r}|u|^pdS\left(\int_{B_R\backslash
B_r}\frac{|u|^p}{|x|^{(n-1)p'}\left|\frac{R^m-|x|^m}{m}
\right|^p}dx\right)^{-\frac{1}{p'}}
\\
&&\rightarrow_{R\rightarrow\infty}\frac{1}{p}r^{1-n}\int_{\partial B_r}|u|^pdS\left(\int_{B_r^c
}\frac{|u|^p}{|x|^{(n-1)p'}}dx\right)^{-\frac{1}{p'}}.
\end{array}
$$
and hence \eqref{eq21} holds.

Let us check that with the function $u_\alpha(x)=e^{-\alpha|x|^m}, \alpha>0$ inequality \eqref{eq21} becomes an equality. Simply computation gives us
$$
\begin{array}{lll}
I_{1\alpha}&=&\left(\int_{
B^c_r}\frac{e^{-\alpha p|x|^m}}{|x|^{(n-1)p'}}dx\right)^{\frac{1}{p'}}
=\left(\int_{S_1}\int_r^\infty e^{-\alpha p\rho^m}\rho^{-(n-1)p'}\rho^{n-1}d\rho dS\right)^{\frac{1}{p'}}
\\
&=&|S_1|^{\frac{1}{p'}}\left(\frac{1}{m}\int_r^\infty e^{-\alpha p\rho^m}d\rho^m\right)^{\frac{1}{p'}}=|S_1|^{\frac{1}{p'}}\frac{1}{(\alpha pm)^{\frac{1}{p'}}}e^{-\alpha (p-1)r^m},
\\[1pt]
\\
I_{2\alpha}&=&\left(\int_{B^c_r}\left|\frac{<x,\nabla
u>}{|x|}\right|^pdx\right)^{\frac{1}{p}}=\left(\int_{S_1}\int_r^\infty e^{-\alpha p\rho^m}(\alpha m)^p\rho^{-(n-1)p'}\rho^{n-1}d\rho dS\right)^{\frac{1}{p}}
\\[1pt]
\\
&=&|S_1|^{\frac{1}{p}}\left(\frac{(\alpha m)^p}{m\alpha p}\int_r^\infty e^{-\alpha p\rho^m}d\rho^m\right)^{\frac{1}{p}}=|S_1|^{\frac{1}{p}}\frac{\alpha m}{(\alpha pm)^{\frac{1}{p}}}e^{-\alpha r^m},
\\[1pt]
\\
I_{3\alpha}&=&\frac{1}{p}r^{1-n}\int_{\partial B_r}|u_\alpha|^pdS=\frac{1}{p}r^{1-n}\int_{S_1}e^{-\alpha pr^m}r^{n-1}dS=|S_1|\frac{1}{p}e^{-\alpha pr^m},
\end{array}
$$
and we get the equality $I_{1\alpha}I_{2\alpha}=I_{3\alpha}$.

(ii) For $m<0$, after the limit $R\rightarrow\infty$ in \eqref{eq12} we obtain \eqref{eq22}. Since $k>\frac{1}{p'}$ it follows that $u_k(x)\in M_1(r,\infty)$. Moreover,
 inequality \eqref{eq22} becomes equality. Indeed,
$$
\begin{array}{lll}
I_{1k}&=&\left(\int_{B^c_r}\left|\frac{<x,\nabla
u_k>}{|x|}\right|^pdx\right)^{\frac{1}{p}}=\left(\int_{B^c_r}\left|km|x|^{km-1}\right|^pdx\right)^{\frac{1}{p}}
\\[1pt]
\\
&=&|km|\left(|S_1|\int_r^\infty\rho^{(km-1)p}\rho^{n-1}d\rho\right)^{\frac{1}{p}}
=|km|\left(|S_1|\frac{r^{(km-1)p+n}}{|(km-1)p+n|}\right)^{\frac{1}{p}},
\\
I_{2k}&=&\frac{|m|}{p'}\left(\int_{
B^c_r}\frac{|u_k|^p}{|x|^{p}}dx\right)^{\frac{1}{p}}=\frac{|m|}{p'}\left(|S_1|\int_r^\infty\rho^{(km-1)p+n-1}d\rho\right)^{\frac{1}{p}}
\\[1pt]
\\
&=&\frac{|m|}{p'}\left(|S_1|\frac{r^{(km-1)p+n}}{|(km-1)p+n|}\right)^{\frac{1}{p}},
\\[1pt]
\\
I_{3k}&=&\frac{1}{p}r^{1-p}\int_{\partial
B_r}|u_k|^pdS\left(\int_{
B^c_r}\frac{|u_k|^p}{|x|^{p}}dx\right)^{-\frac{1}{p'}}
\\
&=&\frac{1}{p}r^{1-p}|S_1|r^{kmp+n-1}\left(|S_1|\frac{r^{(km-1)p+n}}{(km-1)p+n}\right)^{-\frac{1}{p'}}
\\[1pt]
\\
&=&\frac{1}{p}\left(|S_1|r^{|(km-1)p+n|}\right)^{\frac{1}{p}}(|(km-1)p+n|)^{\frac{1}{p'}}.
\end{array}
$$
Since
$$
\frac{|km|}{|(km-1)p+n|}=\frac{|m|}{p'}\frac{1}{|(km-1)p+n|}+\frac{1}{p}
$$
we get the equality $I_{1k}=I_{2k}+I_{3k}$.

(iii) For $m=0$, after the limit $R\rightarrow\infty$ in \eqref{eq13}, since
$$
\begin{array}{lll}
&&\frac{n-1}{n}\left(\int_{B_R\backslash
B_r}\frac{|u|^n}{|x|^n\left|\ln\frac{R}{|x|}\right|^n}dx\right)^{\frac{1}{n}}\rightarrow_{R\rightarrow\infty} 0 \ \ \hbox{and}
\\[1pt]
\\
&&\frac{1}{n}\left(r\ln\frac{R}{r}\right)^{1-n}\int_{\partial
B_r}|u|^ndS\left(\int_{B_R\backslash
B_r}\frac{|u|^n}{|x|^n\left|\ln\frac{R}{|x|}\right|^n}dx\right)^{\frac{1-n}{n}}
\\[1pt]
\\
&&=\frac{1}{n}r^{1-n}\ln^{1-n}R\left(1-\frac{\ln r}{\ln R}\right)^{1-n}\int_{\partial
B_r}|u|^ndS\frac{1}{\ln^{1-n}R}\left(\int_{B_R\backslash
B_r}\frac{|u|^n}{|x|^n\left|1-\frac{\ln |x|}{\ln R}\right|^n}dx\right)^{\frac{1-n}{n}}
\\[1pt]
\\
&&\rightarrow_{R\rightarrow\infty}\frac{1}{n}r^{1-n}\int_{\partial
B_r}|u|^ndS\left(\int_{B_r^c}\frac{|u|^n}{|x|^n}dx\right)^{\frac{1-n}{n}}
\end{array}
$$
we obtain \eqref{eq23}.
Let us check that with function $u_q(x)=|x|^{q}, q<0$ inequality \eqref{eq23} becomes equality. Indeed,
$$
\begin{array}{lll}
I_{1q}&=&\left(\int_{
B^c_r}\frac{|u_q|^n}{|x|^{n}}dx\right)^{\frac{n-1}{n}}
=\left(|S_1|\int_r^\infty\rho^{(q-1)n}\rho^{n-1}d\rho\right)^{\frac{n-1}{n}}=|S_1|^{\frac{n-1}{n}}(n|q|)^{-\frac{n-1}{n}}r^{(n-1)q},
\\[1pt]
\\
I_{2q}&=&\left(\int_{B^c_r}\left|\frac{<x,\nabla
u_q>}{|x|}\right|^ndx\right)^{\frac{1}{n}}=\left(|S_1|\int_r^\infty |q|^n\rho^{(q-1)n}\rho^{n-1}d\rho\right)^{\frac{1}{n}}=|S_1|^{\frac{1}{n}}|q|^{\frac{n-1}{n}}n^{-\frac{1}{n}}r^q,
\\[1pt]
\\
I_{3q}&=&\frac{1}{n}r^{1-n}\int_{\partial
B_r}|u_q|^ndS=\frac{1}{n}r^{1-n}|S_1|r^{nq}r^{n-1}=|S_1|\frac{1}{n}r^{qn},
\end{array}
$$
and we get the equality $I_{1q}I_{2q}=I_{3q}$.
\end{proof}
\begin{proposition}
\label{prop2}
For every $u\in M_2(r,\infty)$ the following inequalities hold:
\begin{itemize}
\item[(i)]
\begin{equation}
\label{eq24}
\begin{array}{lll}
&&\left(\int_{B^c_r}\left|\frac{<x,\nabla
u>}{|x|}\right|^pdx\right)^{\frac{1}{p}}
\geq\frac{m}{p'}\left(\int_{
B^c_r}\frac{|u|^p}{|x|^{(n-1)p'}\left|r^m-|x|^m\right|^p}dx\right)^{\frac{1}{p}}
\\[1pt]
\\
&+&\frac{1}{p}\hbox{limsup}_{R\rightarrow\infty}\left[R^{1-p}\int_{\partial
B_R}|u|^pdS\right]\left(\int_{
B^c_r}\frac{|u|^p}{|x|^{(n-1)p'}\left||x|^m-r^m\right|^p}dx\right)^{-\frac{1}{p'}}, 
\\[1pt]
\\ 
&& for \ \ m>0.
\end{array}
\end{equation}
With function $u_\varepsilon(x)=|x|^{-\frac{m(1-\varepsilon)}{p'}}\left(|x|^m-r^m\right)^{\frac{(1+\varepsilon)}{p'}}$, $0<\varepsilon<1$ inequality
\eqref{eq24} is $\varepsilon$-sharp, i.e.
\begin{equation}
\label{eq180}
\left(\frac{m}{p'}\right)^p\leq\frac{L^1(u_{\varepsilon})}{\left[\left(K^1(u_{\varepsilon})\right)^{\frac{1}{p}}
+K^1_0(u_{\varepsilon})\left(K^1(u_{\varepsilon})\right)^{-\frac{1}{p'}}\right]^p}\leq\frac{L^1(u_{\varepsilon})}{K^1(u_{\varepsilon})}
\leq\left(\frac{m}{p'}\right)^p(1+\varepsilon)^p.
\end{equation}
\item[(ii)]
\begin{equation}
\label{eq25}
\begin{array}{lll}
&&\left(\int_{B^c_r}\left|\frac{<x,\nabla
u>}{|x|}\right|^pdx\right)^{\displaystyle\frac{1}{p}}
\geq\frac{|m|}{p'}\left(\int_{
B^c_r}\frac{|u|^p}{|x|^{(n-1)p'}\left|r^m-|x|^m\right|^p}dx\right)^{\frac{1}{p}}
\\[1pt]
\\
&+&\frac{r^{n-p}}{p}\hbox{limsup}_{R\rightarrow\infty}\left[R^{1-n}\int_{\partial
B_R}|u|^pdS\right]\left(\int_{
B^c_r}\frac{|u|^p}{|x|^{(n-1)p'}\left||x|^m-r^m\right|^p}dx\right)^{-\frac{1}{p'}}, 
\\ 
&& for \ \ m<0.
\end{array}
\end{equation}
With function $u_s(x)=\left(r^m-|x|^m\right)^{s}, \ \
s>\frac{1}{p'}$ inequality
\eqref{eq25} becomes equality.
\item[(iii)]
\begin{equation}
\label{eq26}
\begin{array}{lll}
&&\left(\displaystyle\int_{B^c_r}\left|\frac{<x,\nabla
u>}{|x|}\right|^ndx\right)^{\displaystyle\frac{1}{n}}
\geq\frac{n-1}{n}\left(\int_{B^c_r}\frac{|u|^n}{|x|^n
\left|\ln\displaystyle\frac{|x|}{r}\right|^n}dx\right)^{\displaystyle\frac{1}{n}}
\\[1pt]
\\
&+&\displaystyle\frac{1}{n}\hbox{limsup}_{R\rightarrow\infty}
\left[\left(R\ln\displaystyle\frac{R}{r}\right)^{1-n}\int_{\partial
B_R}|u|^ndS\right]\left(\int_{
B^c_r}\frac{|u|^n}{|x|^n\left|\ln\displaystyle\frac{|x|}{r}
\right|^n}dx\right)^{\displaystyle\frac{1-n}{n}}, 
\\ 
&& for \ \ m=0.
\end{array}
\end{equation}
For function
$$
u_\eta(x)=\left\{\begin{array}{l} \left(\ln\frac{|x|}{r}\right)^{\frac{n-1}{n}(1+\frac{\eta}{2})}M^{\frac{n-1}{n}(1-\frac{\eta}{2})}, \ \ r<|x|<M
\\[1pt]
\\
\left(\ln\frac{|x|}{r}\right)^{\frac{n-1}{n}(1+\frac{\eta}{2})}|x|^{\frac{n-1}{n}(1-\frac{\eta}{2})}, \ \ M<|x| \end{array}\right.,
$$
where $0<\eta<1$, inequality   \eqref{eq26} is $\eta$-sharp, i.e.
\begin{equation}
\label{eq200}
\left(\frac{n-1}{n}\right)^n\leq\frac{L^1(u_{\eta})}{K^1(u_{\eta})} \leq
\left(\frac{n-1}{n}\right)^n(1+\eta)^n.
\end{equation}
\end{itemize}
\end{proposition}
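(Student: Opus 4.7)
The plan is to mimic the argument of Proposition \ref{prop1}: apply Theorem \ref{th1} on the annulus $B_R \setminus B_r$ with $i = 2$, then take the limit $R \to \infty$. Taking $p$-th roots of \eqref{eq6} rewrites it in the additive form
$$
L^2(u)^{1/p} \;\geq\; \tfrac{1}{p'}\, K^2(u)^{1/p} \;+\; \tfrac{1}{p}\, K^2_0(u)\, K^2(u)^{-1/p'},
$$
which is what one passes to the limit term by term. The structural difference from Proposition \ref{prop1} is that the set $M_2(r,\infty)$ is designed so that the inner boundary piece of $K^2_0$ (on $\partial B_{\hat r}$ as $\hat r \to r+0$) vanishes, while the outer boundary piece on $\partial B_R$ need not; the latter survives in the limit as the $\hbox{limsup}_{R\to\infty}[\cdot]$ terms of \eqref{eq24}--\eqref{eq26}.

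First I would write $|\nabla \psi_2/\psi_2|$ explicitly. For $m \neq 0$ this is $|m||x|^{m-1}/\bigl||x|^m - r^m\bigr|$; using the identities $(m-1)p = -(n-1)p'$, $m(p-1) = p-n$, and $(m-1)(p-1) = 1-n$ (all consequences of $m = (p-n)/(p-1)$), one gets $|f_2|^{p'} = |m|^p/(|x|^{(n-1)p'}\bigl||x|^m - r^m\bigr|^p)$, which is exactly the Hardy integrand in the right-hand sides of \eqref{eq24}, \eqref{eq25}; the analogous log calculation gives \eqref{eq26}. Next I would compute $\langle f_2,\nu\rangle|_{\partial B_R}$: asymptotically it equals $m^{p-1}R^{1-p}$ for $m>0$, $|m|^{p-1}r^{n-p}R^{1-n}$ for $m<0$, and $(R\ln(R/r))^{1-n}$ for $m=0$. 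In every case the factor $|m|^{p-1}$ (resp.\ $1$) cancels against the matching factor in $K^2(u)^{-1/p'}$, leaving the clean limsup expressions appearing in the statement.

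The sharpness claims are verified by direct radial computation. Part (ii), with $u_s = (r^m - |x|^m)^s$ ($s > 1/p'$), is the cleanest: all three integrals $L^2, K^2, K^2_0$ reduce to elementary radial integrals, $u_s$ stays bounded as $|x| \to \infty$ so the limsup boundary term is zero, and the coefficients align to give exact equality as in Proposition \ref{prop1}(ii). For parts (i) and (iii) one obtains only $\varepsilon$- (resp.\ $\eta$-) sharp bounds: the lower bound $(m/p')^p$ (resp.\ $((n-1)/n)^n$) in \eqref{eq180} and \eqref{eq200} is immediate from applying \eqref{eq24} (resp.\ \eqref{eq26}) to the test function, while for the upper bound one computes $L^2(u_\varepsilon)/K^2(u_\varepsilon)$ and $L^2(u_\eta)/K^2(u_\eta)$ explicitly in polar coordinates and shows they are at most $(m/p')^p(1+\varepsilon)^p$ and $((n-1)/n)^n(1+\eta)^n$ respectively. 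The exponents in $u_\varepsilon$ and $u_\eta$ are calibrated precisely so that both test functions lie in $M_2(r,\infty)$ and the gradient and Hardy integrals are finite and of the same order.

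The main obstacle is part (iii): the test function $u_\eta$ is piecewise, so $L^2(u_\eta)$ and $K^2(u_\eta)$ must be split into contributions from $\{r < |x| < M\}$ and $\{|x| > M\}$, each of which, after the substitution $t = \ln(|x|/r)$, becomes a one-dimensional integral with power-type integrand whose convergence at $t=0$ and $t=\infty$ depends on the sign of $\eta$. One must track how the $M$-dependent constants from the two regions combine, verify that the outer limsup boundary term on $\partial B_R$ contributes nothing extra in the limit (so the ratio $L^2/K^2$ really controls the inequality), and show that the residual $M$-dependence cancels out of the ratio, leaving the clean excess factor $(1+\eta)^n$. Once this bookkeeping is done, \eqref{eq200} follows.
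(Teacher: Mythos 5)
Your overall strategy is exactly the paper's: apply Theorem \ref{th1} with $i=2$ on $B_R\setminus B_r$, rewrite \eqref{eq6} in additive form after taking $p$-th roots, let $R\to\infty$ so that only the outer boundary contribution survives as a limsup, and verify sharpness by radial computation with the stated test functions. Your identification of $|f_2|^{p'}$ and of $\langle f_2,\nu\rangle$ on $\partial B_R$, including the cancellation of the $|m|^{p-1}$ factors against $K^{-1/p'}$, reproduces the paper's derivation of \eqref{eq24}--\eqref{eq26}, and your outline of the sharpness computations for parts (i) and (iii) is consistent with what the paper does.

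There is, however, one genuine error, in the sharpness verification for part (ii). You assert that because $u_s=(r^m-|x|^m)^s$ stays bounded as $|x|\to\infty$, the limsup boundary term vanishes. It does not: since $|\partial B_R|=|S_1|R^{n-1}$ and $u_s(x)\to r^{ms}\neq 0$ at infinity, one has $R^{1-n}\int_{\partial B_R}|u_s|^p\,dS\to |S_1|\,r^{msp}>0$, and the paper computes this term explicitly as $I_{3s}=\frac{|S_1|}{p}\,r^{n-p+mps}$. This nonzero contribution is essential to the claimed equality: a direct computation shows $\frac{|m|}{p'}\left(I_{2s}\right)^{1/p}=\frac{1}{sp'}\,I_{1s}$, and since $s>1/p'$ this is strictly less than $I_{1s}$, so without the boundary term you would obtain a strict inequality, not equality. (The same phenomenon occurs in Proposition \ref{prop1}(ii), which you cite as the model -- there too the boundary term $I_{3k}$ is nonzero and is needed to close the identity $I_{1k}=I_{2k}+I_{3k}$.) Your verification of part (ii) must therefore be redone retaining the limsup term; once it is included, the coefficients align via the identity $\frac{1}{sp'}+\frac{1}{sp}\left[(s-1)p+1\right]=1$ and equality holds, as in the paper.
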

\begin{proof}
For $m\neq0$ inequality \eqref{eq6} has the form
\begin{equation}
\label{eq220}
\begin{array}{lll}
&&\left(\int_{B_R\backslash B_r}\left|\frac{<x,\nabla
u>}{|x|}\right|^p\right)^{\frac{1}{p}}
\geq\frac{|m|}{p'}\left(\int_{B_R\backslash
B_r}\frac{|u|^p}{|x|^{(n-1)p'}\left|r^m-|x|^m
\right|^p}dx\right)^{\frac{1}{p}}
\\[1pt]
\\
&+&\frac{1}{p}R^{1-n}\left|R^m-r^m
\right|^{1-p}\int_{\partial B_R}|u|^pdS\left(\int_{B_R\backslash
B_r}\frac{|u|^p}{|x|^{(n-1)p'}\left|r^m-|x|^m
\right|^p}dx\right)^{-\frac{1}{p'}}.
\end{array}
\end{equation}
while for $m=0$ \eqref{eq6} becomes
\begin{equation}
\label{eq221}
\begin{array}{lll}
&&\left(\int_{B_R\backslash B_r}\left|\frac{<x,\nabla
u>}{|x|}\right|^ndx\right)^{\frac{1}{n}}
\geq\frac{n-1}{n}\left(\int_{B_R\backslash
B_r}\frac{|u|^n}{|x|^n\left|\ln\frac{|x|}{r}\right|^n}dx\right)^{\frac{1}{n}}
\\[1pt]
\\
&+&\frac{1}{n}\left(R\ln\frac{R}{r}\right)^{1-n}\int_{\partial
B_r}|u|^ndS\left(\int_{B_R\backslash
B_r}\frac{|u|^n}{|x|^n\left|\ln\frac{|x|}{r}\right|^n}dx\right)^{\frac{-1}{n'}}.
\end{array}
\end{equation}
(i) If $m>0$ after the limit $R\rightarrow\infty$ in \eqref{eq220} we obtain
$$
\begin{array}{lll}
&&\frac{1}{p}\hbox{limsup}_{R\rightarrow\infty}\left[R^{1-n}\left|R^m-r^m\right|^{1-p}\int_{\partial
B_R}|u|^pdS\right]
\\
&&=\frac{1}{p}\hbox{limsup}_{R\rightarrow\infty}\left[R^{1-n}R^{n-p}\left|1-\frac{r^m}{R^m}\right|^{1-p}\int_{\partial
B_R}|u|^pdS\right]
\\[1pt]
\\
&&=\frac{1}{p}\hbox{limsup}_{R\rightarrow\infty}R^{1-p}\int_{\partial
B_R}|u|^pdS
\end{array}
$$
which proves \eqref{eq24}.

For the function $u_\varepsilon(x)=|x|^{-\frac{m(1-\varepsilon)}{p'}}\left(|x|^m-r^m\right)^{\frac{(1+\varepsilon)}{p'}}$, $0<\varepsilon<1$, $u_\varepsilon(x)\in M_2(r,\infty)$ simple computation give us
$$
\begin{array}{lll}
&&I_{1\varepsilon}=L^1(u_\varepsilon)=\int_{B^c_r}\left|\frac{<x,\nabla
u_\varepsilon>}{|x|}\right|^pdx
\\[1pt]
\\
&&=\left(\frac{m}{p'}\right)^p\int_{B^c_r}|x|^{-m(1-\varepsilon)(p-1)-p}\left(|x|^m-r^m\right)^{(1+\varepsilon(p-1)-p}
\\[1pt]
\\
&&\times\left[(1+\varepsilon)|x|^m-(1-\varepsilon)\left(|x|^m-r^m\right)\right]^pdx
\\[1pt]
\\
&&=\left(\frac{m}{p'}\right)^p|S_1|\int_r^\infty(\rho^m-r^m)^{(1+\varepsilon)(p-1)-p}
\\[1pt]
\\
&&\times\left[(1+\varepsilon)\rho^m-(1-\varepsilon)(\rho^m-r^m)\right]^p\rho^{-m(1-\varepsilon)(p-1)-p+n-1}d\rho
\\[1pt]
\\
&&\leq\left(\frac{m}{p'}\right)^p|S_1|(1+\varepsilon)^p\int_r^\infty(\rho^m-r^m)^{(1+\varepsilon)(p-1)-p}\rho^{-m(1-\varepsilon)(p-1)-p+n-1+mp}d\rho
\\[1pt]
\\
&&=\left(\frac{m}{p'}\right)^p|S_1|(1-\varepsilon)^p\int_r^\infty(\rho^m-r^m)^{(1+\varepsilon)(p-1)-p}\rho^{-m(1+\varepsilon)(p-1)-\frac{n-1}{p-1}}d\rho,
\end{array}
$$
because $n-1-p+mp=-\frac{n-1}{p-1}$.
$$
\begin{array}{lll}
I_{2\varepsilon}&=&K^1(u_\varepsilon)=\int_{B^c_r}|x|^{-m(1-\varepsilon)(p-1)-(n-1)p'}\left(|x|^m-r^m\right)^{(1+\varepsilon)(p-1)-p}dx
\\[1pt]
\\
&=&|S_1|\int_r^\infty\left(\rho^m-r^m\right)^{(1+\varepsilon)(p-1)-p}\rho^{-m(1-\varepsilon)(p-1)-\frac{n-1}{p-1}}d\rho.
\end{array}
$$
Thus \eqref{eq180} follows immediately from expressions of $I_{1\varepsilon}$ and $I_{2\varepsilon}$.
(ii) When $m<0$, after the limit $R\rightarrow\infty$ in \eqref{eq220} we get
$$
\begin{array}{lll}
&&\frac{1}{p}\hbox{limsup}_{R\rightarrow\infty}\left[R^{1-n}\left|R^m-r^m\right|^{1-p}\int_{\partial
B_R}|u|^pdS\right]
\\[1pt]
\\
&&=\frac{1}{p}\hbox{limsup}_{R\rightarrow\infty}\left[r^{n-p}R^{1-n}\left|\frac{R^m}{r^m}-1\right|^{1-p}\int_{\partial
B_R}|u|^pdS\right]
\\[1pt]
\\
&&=\frac{r^{n-p}}{p}\hbox{limsup}_{R\rightarrow\infty}R^{1-n}\int_{\partial
B_R}|u|^pdS
\end{array}
$$
which proves \eqref{eq25}.

For the function $u_s(x)=\left(r^m-|x|^m\right)^{s}, \ \
s>\frac{1}{p'}$  we have the identities
$$
\begin{array}{lll}
I_{1s}&=&\left(\int_{B^c_r}\left|\frac{<x,\nabla
u_s>}{|x|}\right|^pdx\right)^{\frac{1}{p}}=\left(s^p|m|^p|S_1|\int_r^\infty\rho^{p(m-1)+n-1}\left(r^m-\rho^m\right)^{(s-1)p}d\rho\right)^{\frac{1}{p}}
\\[1pt]
\\
&=&s|m||S_1|^{\frac{1}{p}}\left(-\frac{1}{m}\int_r^\infty\left(r^m-\rho^m\right)^{(s-1)p}d\left(r^m-\rho^m\right)\right)^{\frac{1}{p}},
\\[1pt]
\\
&=&s|m|^{\frac{1}{p'}}|S_1|^{\frac{1}{p}}r^{\frac{m}{p}[(s-1)p+1]}[(s-1)p+1]^{-\frac{1}{p}}
\end{array}
$$
because $p(m-1)+n-1=m-1$.
$$
\begin{array}{lll}
I_{2s}&=&\int_{
B^c_r}\frac{|u_s|^p}{|x|^{(n-1)p'}\left|r^m-|x|^m\right|^p}dx=|S_1|\int_r^\infty\rho^{(n-1)(1-p')}\left|r^m-\rho^m\right|^{(s-1)p}d\rho
\\[1pt]
\\
&=&\frac{|S_1|}{|m|}\frac{r^{m[(s-1)p+1]}}{(s-1)p+1},
\\[1pt]
\\
I_{3s}&=&\frac{r^{n-p}}{p}\hbox{limsup}_{R\rightarrow\infty}\left[R^{1-n}\int_{\partial
B_R}\left|r^m-R^m\right|^{sp}dS\right]
\\[1pt]
\\
&=&\frac{r^{n-p}}{p}\hbox{limsup}_{R\rightarrow\infty}\left|R^m-r^m\right|^{sp}|S_1|=\frac{|S_1|}{p}r^{n-p+mps}.
\end{array}
$$
Simple computation gives us the equality
$$
\begin{array}{lll}
&&\frac{|m|}{p'}\left(I_{2s}\right)^{\frac{1}{p}}+I_{3s}\left(I_{2s}\right)^{-\frac{1}{p'}}
\\[1pt]
\\
&=&\frac{|S_1||m|^{\frac{1}{p'}}r^{\frac{m}{p}[(s-1)p+1]}}{p'[(s-1)p+1]^{\frac{1}{p}}}
+\frac{|S_1|r^{n-p+mps}}{p}\left(\frac{|S_1|}{|m|}\right)^{-\frac{1}{p'}}\left[\frac{r^{m[(s+1)p+1]}}{(s-1)p+1}\right]^{-\frac{1}{p'}}
\\[1pt]
\\
&=&\frac{1}{p}|m|^\frac{1}{p'}|S_1|^{\frac{1}{p}}r^{\frac{m}{p}[(s-1)p+1]}\left(\frac{p-1}{[(s-1)p+1]^p}+[(s-1)p+1]^{\frac{1}{p'}}\right)
\\[1pt]
\\
&=&s|m|^\frac{1}{p'}|S_1|^{\frac{1}{p}}r^{\frac{m}{p}[(s-1)p+1]}[(s-1)p+1]^{-\frac{1}{p}}=I_{1s}
\end{array}
$$
because
$$
n-p+mps-\frac{m}{p'}[(s-1)p+1]=\frac{m}{p}[(s-1)p+1]
$$
(iii) After the limit $R\rightarrow\infty$ in \eqref{eq221} we get \eqref{eq26}.

The function $u_\eta(x)$ belongs to $M_2(r,\infty)$ for $m=0$. Moreover, for $r<|x|<M$ we have the equalities
$$
\begin{array}{lll}
&&I_{1\eta}=L^1(u_\eta)=M^{(n-1)(1-\eta)}\left(1+\frac{\eta}{2}\right)^n\left(\frac{n-1}{n}\right)^n|S_1|\int_r^M\left(\ln\frac{\rho}{r}\right)^{(n-1)\left(1+\frac{\eta}{2}\right)-n}\rho^{-1}d\rho,
\\[1pt]
\\
&&I_{2\eta}=K^1(u_\eta)=M^{(n-1)(1+\frac{\eta}{2})}|S_1|\int_r^M\left(\ln\frac{\rho}{r}\right)^{(n-1)\left(1+\frac{\eta}{2}\right)-n}\rho^{-1}d\rho,
\end{array}
$$
and hance
$$
\frac{L^1(u_\eta)}{K^1(u_\eta)}=\left(\frac{n-1}{n}\right)^n\left(1+\frac{\eta}{2}\right)^n\leq\left(\frac{n-1}{n}\right)^n\left(1+\eta\right)^n.
$$
Tedious calculations give us for $|x|\geq M$ the identities
$$
\begin{array}{lll}
&&I_{1\eta}=L^1(u_\eta)=\left(\frac{n-1}{n}\right)^n|S_1| \int_M^\infty\rho^{(n-1)(1-\frac{\eta}{2})-1}\left(\ln\frac{\rho}{r}\right)^{(n-1)\left(1+\frac{\eta}{2}\right)-n}
\\[1pt]
\\
&&\times\left[\left(1-\frac{\eta}{2}\right)\frac{1}{\rho}\ln\frac{\rho}{r}+\left(1+\frac{\eta}{2}\right)\right]^nd\rho
\\[1pt]
\\
&&I_{2\eta}=K^1(u_\eta)=|S_1|\int_r^M\rho^{(n-1)(1-\frac{\eta}{2}-1}\left(\ln\frac{\rho}{r}\right)^{(n-1)\left(1+\frac{\eta}{2}\right)-n},
\end{array}
$$
If $M$ is sufficiently large, i.e.,
$$
\frac{1}{M}\ln\frac{M}{r}<\frac{\eta}{2-\eta} \ \ \hbox{ and } \ \ M>er,
$$
then
$$
\left(1-\frac{\eta}{2}\right)\frac{1}{\rho}\ln\frac{\rho}{r}+1+\frac{\eta}{2}\leq\left(1-\frac{\eta}{2}\right)\frac{\eta}{2-\eta}+1+\frac{\eta}{2}=1+\eta,
$$
because the function $h(\rho)=\frac{1}{\rho}\ln\frac{\rho}{r}$ is monotone decreasing for $\rho>er$. Thus \eqref{eq200} follows from expressions of $I_{1\eta}$ and $I_{2\eta}$ above for $|x|\geq r$.
\end{proof}
\section{Sharp inequalities in an exterior of a ball   $B^c_r$ for $u\in W_0^{1,p}(B_r^c)$}
\label{sec4}
For $m<0$, i.e. $p<n$, we can combine inequalities
\eqref{eq12} and \eqref{eq22} for functions $u\in M(r,\infty)$, where

$$
M(r,\infty)=\left\{\begin{array}{l} u\in W^{1,p}_0(B^c_r),
\\[1pt]
\\
\left(\displaystyle\frac{\hat{r}^m-r^m}{m}\right)^{1-p}\displaystyle\int_{\partial
B_{\hat{r}}}|u|^pdx\rightarrow0, \ \ \hat{r}\rightarrow r+0,
\\[1pt]
\\
R^{1-p}\displaystyle\int_{\partial
B_{R}}|u|^pdx\rightarrow0, \ \ \hbox{ for }
R\rightarrow\infty.
\end{array}\right.
$$
For  $r<\gamma<\infty$, $\gamma=2^{\frac{1}{|m|}}r$, we define
$$
\begin{array}{l}
L_1(u)=\int_{B_\gamma\backslash B_r}\left|\frac{\langle x,\nabla
u\rangle}{|x|}\right|^pdx,\ \ L_2(u)=\int_{B^c_\gamma}\left|\frac{\langle x,\nabla
u\rangle}{|x|}\right|^pdx,
\\[1pt]
\\
K_{11}(u)=|m|^p\int_{B_{\gamma}\backslash B_r}\frac{|u|^p}{|x|^{(n-1)p'}(|x|^m-r^m)^p}dx, \ \ K_{12}(u)=|m|^p\int_{B^c_{\gamma}}\frac{|u|^p}{|x|^p}dx,
\\[1pt]
\\
K_{01}(u)=|m|^{p-1}\gamma^{1-p}\int_{\partial
B_{\gamma}}|u|^pd\sigma=
K_{02}(u)=|m|^{p-1}\gamma^{1-p}\int_{\partial B_{\gamma}}|u|^pd\sigma=K_0.
\end{array}
$$
\begin{proposition}
\label{prop3}
If $\gamma=2^{1/|m|}r$ then  for every $u\in M(r,\infty)$ the inequality
\begin{equation}
\label{eq304}
L(u)=\sum_1^2L_j(u)=\int_{B_r^c}\left|\frac{\langle x,\nabla u\rangle}{|x|}\right|^pdx\geq\left(\frac{1}{p}\right)^p\sum_1^2\frac{[K_{0j}(u)+(p-1)
K_{1j}(u)]^p}{(K_{1j}(u))^{p-1}}=K(u), 
\end{equation}
holds in $B^c_r$.

The inequality \eqref{eq304} becomes an equality for functions $u_\beta(x)\in M(r,\infty)$, $\beta>1/p'$ where
$$
u_\beta(x)=\left\{\begin{array}{l}
(r^m-|x|^m)^{\beta} \ \  x\in
B_{\gamma}\backslash B_r,
\\[1pt]
\\
|x|^{m\beta},  \ \  x\in B^c_{\gamma}.
\end{array}\right.
$$
\end{proposition}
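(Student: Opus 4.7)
The plan is to split $B_r^c$ at the sphere $\partial B_\gamma$ into the annulus $B_\gamma\setminus B_r$ and the exterior $B_\gamma^c$, apply a previously-proved Hardy inequality on each piece, and sum the two bounds. The special radius $\gamma=2^{1/|m|}r$ is forced on us by two compatible requirements: first, it makes the boundary contributions produced on $\partial B_\gamma$ by the two pieces coincide into a single term $K_0$; second, it makes the two branches of $u_\beta$ agree on $\partial B_\gamma$, which allows the extremizer to be glued into a single admissible function in $W^{1,p}_0(B_r^c)$.

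On the annulus $B_\gamma\setminus B_r$, I apply Theorem \ref{th1} with $i=2$, i.e.\ with $\psi_2(x)=(|x|^m-r^m)/(\gamma^m-r^m)$, which vanishes on $\partial B_r$ and equals $1$ on $\partial B_\gamma$. The membership $u\in M(r,\infty)\subset M_2(r,\gamma)$ kills the boundary integral at $\partial B_r$ in the limit $\hat r\to r+0$, so only the $\partial B_\gamma$ piece of $K_0^2(u)$ survives. A short computation using $(m-1)p=-(n-1)p'$ identifies $K_1^2(u)$ with $K_{11}(u)$; a second computation using $(m-1)(p-1)=1-n$ together with $\gamma^m=r^m/2$ (so $r^m-\gamma^m=\gamma^m$) identifies the surviving boundary term with $K_{01}(u)=|m|^{p-1}\gamma^{1-p}\int_{\partial B_\gamma}|u|^p d\sigma$. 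Theorem \ref{th1} then delivers
\begin{equation*}
L_1(u)\geq\bigl(1/p\bigr)^p\frac{[K_{01}(u)+(p-1)K_{11}(u)]^p}{K_{11}(u)^{p-1}}.
\end{equation*}
On the exterior $B_\gamma^c$, I apply Proposition \ref{prop1}(ii) with the inner radius $\gamma$ in place of $r$; this is legitimate since the $m<0$ infinity condition in $M(r,\infty)$ is exactly the tail condition of $M_1(\gamma,\infty)$. Writing $J=K_{12}(u)/|m|^p$ and $B=\gamma^{1-p}\int_{\partial B_\gamma}|u|^p d\sigma$, inequality \eqref{eq22} with $\gamma$ in place of $r$ reads $L_2^{1/p}\geq(|m|/p')J^{1/p}+(B/p)J^{-1/p'}$. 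Raising to the $p$-th power, factoring out $J^{-(p-1)}$, and noting $K_{02}(u)=|m|^{p-1}B$, this rearranges into
\begin{equation*}
L_2(u)\geq\bigl(1/p\bigr)^p\frac{[K_{02}(u)+(p-1)K_{12}(u)]^p}{K_{12}(u)^{p-1}}.
\end{equation*}
Summing the two inequalities and using $K_{01}(u)=K_{02}(u)=K_0$ produces \eqref{eq304}.

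For the equality claim I handle the two pieces separately. On the annulus, $u_\beta=(r^m-|x|^m)^\beta$ is a positive constant multiple of $\psi_2^\beta$, and substituting $u=\psi_2^\beta$ into the H\"older step \eqref{eq9} in the proof of Theorem \ref{th1} shows the two sides agree, so the inner bound is tight. On the exterior, $u_\beta=|x|^{m\beta}$ is the extremizer from Proposition \ref{prop1}(ii) (the $k=\beta$ case), and a direct radial integration gives equality in the outer bound; the hypothesis $\beta>1/p'$ is exactly what makes $u_\beta\in W^{1,p}_0(B_r^c)$ and verifies the limit conditions in $M(r,\infty)$. Finally, $\gamma^m=r^m/2$ gives $(r^m-\gamma^m)^\beta=\gamma^{m\beta}$, so the two branches of $u_\beta$ agree at $|x|=\gamma$ and the glued function is admissible.

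The principal obstacle is the boundary bookkeeping at $\partial B_\gamma$: the boundary coefficient emerging directly from Theorem \ref{th1} on the annulus is $|m|^{p-1}(r^m-\gamma^m)^{1-p}\gamma^{1-n}$, whereas $K_{01}$ asks for the coefficient $|m|^{p-1}\gamma^{1-p}$. Their coincidence reduces to $(r^m-\gamma^m)^{p-1}=\gamma^{p-n}$, which is equivalent to $\gamma^m=r^m/2$, i.e.\ to $\gamma=2^{1/|m|}r$. Crucially, this is the same algebraic identity that makes the two branches of $u_\beta$ meet continuously on $\partial B_\gamma$, so a single choice of $\gamma$ simultaneously closes the inequality proof and secures the extremizer.
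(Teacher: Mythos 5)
Your proof is correct and follows essentially the same route as the paper: split $B_r^c$ at $\partial B_\gamma$, apply Theorem \ref{th1} (with $\psi_2$) on the annulus and Proposition \ref{prop1}(ii) on the exterior, use $\gamma^m=r^m/2$ to match the boundary coefficients and the two branches of $u_\beta$, and add. You actually supply more of the bookkeeping (the identities $(m-1)p=-(n-1)p'$, $(m-1)(p-1)=1-n$, and $(r^m-\gamma^m)^{p-1}=\gamma^{p-n}$) than the paper's rather terse proof does, and these details check out.
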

\begin{proof}
As in Theorem \ref{th1} and Proposition \ref{prop1} ii) we
get
\begin{equation}
\label{eq302}\begin{array}{l}
L_1(u)\geq K_1(u) \ \ \hbox{ in }\ \  B_{\gamma}\backslash B_r, 
\\[1pt]
\\
L_2(u)\geq K_2(u) \ \ \hbox{ in } \ \ B^c_{\gamma}
\end{array}
\end{equation}
where
$$
K_j(u)=\frac{\left[K_{0j}(u)+(p-1)K_{1j}(u)\right]^p}{\left(K_{1j}(u)\right)^{p-1}}.
$$
With the choice of  $\gamma=2^{1/|m|}r$, so $\gamma^p=\gamma^{(n-1)p'}(r^m-\gamma^m)^p$, we have  continuous kernel for $K_{j1}$ on $\partial
B_{\gamma}$.

Adding inequalities in
\eqref{eq302} we get for $u\in M(r,\infty)$ $L(u)=\sum_1^2L_j(u)\geq\sum_1^2K_j(u)$.
Hance  from the Joung inequality  we obtain \eqref{eq304}

Using \eqref{eq22} in $B_\gamma^c$ and \eqref{eq13} in $B_{\gamma\backslash r}$ it follows that the inequality \eqref{eq304} becomes equality for functions $u_\beta(x), \beta>\frac{1}{p'}$.
\end{proof}
Proposition \ref{prop3} gives   sharp Hardy inequality in an exterior of a
ball $B^c_r$ for functions $u\in W^{1,p}_0(B^c_r)$, $p<n$.

We will illustrate Proposition \ref{prop2} ii) and Proposition \ref{prop3} in the following examples.
\begin{example}
\label{ex1}\rm
For $p=2$, $n\geq3$, $r=1$, $a=0$ and $m=2-n<0$ from Proposition \ref{prop1} ii)  it follows that the Hardy  inequality
\begin{equation}
\label{eq02}
\int_{B_1^c}|\nabla u|^2dx\geq\left(\frac{n-2}{2}\right)^2\int_{B_1^c}\frac{|u|^2}{|x|^2}dx+\frac{n-2}{2}\int_{\partial B_1^c}u^2+\frac{1}{4}\int_{\partial B_1^c}u^2\left( \int_{B_1^c}\frac{|u|^2}{|x|^2}\right)^{-1},
\end{equation}
holds. Note that \eqref{eq02} has an additional term in the right hand side in comparison with  \eqref{eq01}. Moreover, for the functions $u_k(x)=|x|^{k(2-n)}, k>2$  inequality \eqref{eq02} becomes an equality, i.e., inequality \eqref{eq02} is sharp.
\end{example}
\begin{example}
\label{ex2} \rm
For $p=2, n\geq3, m=2-n<0$, $\gamma=2^{\frac{1}{n-2}}r$,  and every function $u\in W_0^{1,2}(B_r^c)$ the Hardy inequality \eqref{eq302} becomes
\begin{equation}
\label{eq05}
\begin{array}{lll}
&&\int_{B_r^c}\left|\frac{\langle x,\nabla u\rangle}{|x|}\right|^pdx\geq\left(\frac{n-2}{2}\right)^2\int_{B_r^c}\frac{u^2}{|x|^2}dx
\\[1pt]
\\
&&+2\left(\frac{n-2}{2}\right)^2\int_{B_\gamma\backslash B_r}\frac{\left|\frac{x}{r}\right|^{n-2}\left(1-\left|\frac{x}{\gamma}\right|^{n-2}\right)}{\left(\left|\frac{x}{r}\right|^{n-2}-1\right)^2}u^2dx
+\frac{2^{-\frac{1}{n-2}}}{r}(n-2)\int_{\partial B_\gamma}u^2dS
\\[1pt]
\\
&&+\frac{2^{-\frac{2}{n-2}}}{r^2}\left(\frac{4}{n-2}\right)^2\left(\int_{\partial B_\gamma}u^2dS\right)^2
\\[1pt]
\\
&&\times\left[\left(\int_{B_\gamma\backslash B_r}\frac{u^2}{|x|^2\left(\left|\frac{x}{r}\right|^{n-2}-1\right)^2}dx\right)^{-2}+\left(\int_{B_\gamma^c}\frac{u^2}{|x|^2}dx\right)^{-2}\right]
\end{array}
\end{equation}
Moreover, for function $u_\beta(x)$ defined in Proposition \ref{prop3} for $m=2-n$, inequality \eqref{eq05} becomes equality.

Let us mention that Hardy inequality \eqref{eq05} has the same leading term in the right hand side as in inequality \eqref{eq03}, but inequality \eqref{eq05} is sharp one.

Finally, it is difficult to compare inequality \eqref{eq04} with \eqref{eq05}, but \eqref{eq05} is sharp one, i.e., for the functions $u_\beta(x)$ defined in Proposition \ref{prop3}, inequality \eqref{eq05} becomes an equality.
\end{example}

\textbf{Acknowledgement}
\small{This paper is  partially supported by the National Scientific Program "Information and Communication Technologies for a Single Digital Market in Science, Education and Security (ICTinSES)", contract No DO1–205/23.11.2018, financed by the Ministry of Education and Science in Bulgaria and also  by the Grant No BG05M2OP001--1.001--0003, financed by the Science and Education for Smart Growth Operational Program (2014-2020) in Bulgaria and co-financed by the European Union through the European Structural and Investment Funds.}

\end{document}